\pgfplotsset{my style/.append style={axis x line=middle, axis y line=
middle, xlabel={$x$}, ylabel={$y$}, axis equal }}
\newtheorem{thm}{Theorem}[section]
\newtheorem{lem}[thm]{Lemma}
\newtheorem{prop}[thm]{Proposition}
\theoremstyle{definition}
\theoremstyle{remark}
\title{Relaxation  limit  for  Aw-Rascle  system}
\author{Richard Alexander de la Cruz\\
Juan Carlos Juajibioy Otero \\
Leonardo Rendon\\
    }
\date{Departamento de Matemáticas, Universidad Nacional de  Colombia\\
 Bogotá, 2013}
\begin{document}
\maketitle
\abstract{\noindent We study the relaxation limit  for  the Aw-Rascle system of  traffic  flow. For  this we apply the theory of invariant  regions and  the compensated  compactness method  to get global  existence of  Cauchy problem for a particular Aw-Rascle system  with  source, where the source is the  relaxation term, and we show the convergence of this  solutions  to the equilibrium  state  }  
\section{Introduction }\noindent
In \cite{Luref2} the author introduces the  system  
\begin{equation}\label{rascle}
\begin{cases}
\rho_t +(\rho v)_x=0,\\
(v+P(\rho))_t+v(v+P(\rho))_x=0,
\end{cases}
\end{equation}
as  a model  of second  order  of  traffic flow. It  was proposed by the author to remedy the deficiencies  of  second order model or car  traffic pointed in \cite{daganzo} by the  author. The system (\ref{rascle}) models a single  lane traffic where  the  functions $\rho(x,t)$ and $v(x,t)$  represent the density and the velocity of  cars on the road  way and $P(\rho)$ is a given function describing the anticipation of road  conditions in  front  of the drivers. In \cite{Luref2} the author solves the Riemann problem  for the  case in which  the vacuum appears  and the  case in which  the  vacuum  does not. Making the  change of  variable 
\[
w=v+P(\rho),
\] the system (\ref{rascle}) is  transformed in to the system
\begin{equation}\label{rascle2}
\begin{cases}
\rho_t +(\rho (w-P(\rho)))_x=0,\\
w_t+(w-P(\rho))w_x=0.
\end{cases}
\end{equation}
Multiplying  the  second equation  in (\ref{rascle2}) by $\rho$ we  have  the system
\begin{equation}\label{rascle3}
\begin{cases}
\rho_t+(\rho(w-P(\rho))=0,\\
(w\rho)_t+(w\rho(w-P(\rho)))_x=0.
\end{cases}
\end{equation}
Now making the substitution  $m=w\rho$, system (\ref{rascle3}) is transformed in to  system
\begin{equation}\label{rascle4}
\begin{cases}
\rho_t+(\rho\phi(\rho,m))=0,\\
(m)_t+(m\phi(\rho,m))_x=0.
\end{cases}
\end{equation}
where $\phi(\rho,m)=\frac{m}{\rho}-P(\rho)$, this is a system of  non symmetric Keyfitz-Kranzer type. In \cite{Luref1}, the  author, using  the Compensate Compactness Method, shows the  existence of global bounded solutions  for the Cauchy problem for the homogeneous system(\ref{rascle4}). In this paper we are concerned  with the Cauchy problem for the following  Aw-Rascle system 
\begin{equation}\label{awrascle}
\begin{cases}
\rho_t+(m -\rho P(\rho))_x=0,\\
m_t+(\frac{m^2}{\rho}+mP(\rho))_x=\frac{1}{\tau}(h(p)-m).
\end{cases}
\end{equation}
with  bounded measurable  initial  data 
\begin{equation}\label{awdata}
(\rho_(x,0),m(x,0))=(\rho_0(x)+\epsilon,m_0(x)),
\end{equation}
Where  the source  term $\frac{1}{\tau}(h(p)-m)$ is  called  relaxation  term, $\tau$ is the relaxation time  and $h(\rho)$
is an equilibrium  velocity. To see important issues in this model  see \cite{rascle1} and references therein. 
Let us put $F(\rho,m)=(\rho\phi(\rho,m),m\phi(\rho,m))$, with $\phi(\rho,m)=\frac{m}{\rho}-P(\rho)$ by direc calculations we have that the  eigenvalues and  corresponding eigenvectors of system (\ref{awrascle}) are given by
\begin{align}\label{eigen}
\lambda_1(\rho,m)&=\frac{m}{\rho}-P(\rho)-\rho P^{'}(\rho),\ r_1=(1,\frac{m}{\rho}),\\
\lambda_2(\rho,m)&=\frac{m}{\rho}-P(\rho),\  r_2=(2,\frac{m}{\rho}+\rho P^{'}(\rho)).
\end{align}
The  Riemann's invariants  are given  by
\begin{align}
W(\rho,m)&=\frac{m}{\rho},\ \ Z(\rho,m)=\frac{m}{\rho}-P(\rho).
\end{align}
Now as
\[
\nabla \lambda_1 \cdot r_1=-(2P^{'}(\rho)+\rho P^{''}(\rho)), \ \ \nabla \lambda_2\cdot r_2=0,
\]
we  see that  the the  second  wave family  is always linear  degenerate and the  behavior  of the  second family  wave 
depends to the  values  of  $\rho P(\rho)$. In  fact,  if  $\theta(\rho)=\rho P(\rho)$ concave or  convex then  the  second
family wave  is genuinely non linear, see \cite{shockrascle} to the case in which  the  two  families  wave  are linear  degenerate,
\section{The positive invarian  regions}
In this  section  we  show the  theorem  for invariant  regions  for  find a estimates  a  priori of the   parabolic system  (\ref{norelaxawrascle})\\[0.2in]
Let 
\begin{prop}\label{invariant regions}
Let $\overline{\mathcal{O}}\subset \Omega\subset \mathbb{R}^2$  be  a compact, convex  region  whose boundary
consists  of  a finite number  of level  curves $\gamma_j$ of  Riemann invariants, $\xi_j$, such  that 
\begin{equation}\label{geom}
(U-Y)\nabla\xi_j(U)>0 \ \ \text{for}\  U\in \gamma_j, \ \ Y\in \mathcal{O}.
\end{equation}
where $U=(u,v)$, $Y=(y_1,y_2)$.
If $U_0(x)\in \mathbf{L}^{\infty}(\mathbb{R})\times \mathbf{L}^{\infty}(\mathbb{R})$ and $U_0(x)\in K\subset \subset \mathcal{O}$ for all $x\in \mathbb{R}$, with $U_0(x)=(u_0(x),v_0(x))$ then
for  any $\epsilon >0$  the solution  of the system
\begin{equation}
\begin{cases}
u^{\epsilon}_t+f(u^{\epsilon},v^{\epsilon})_x=\epsilon u^{\epsilon}_{xx},\\
v^{\epsilon}_t+g(u^{\epsilon},v^{\epsilon})_x=\epsilon v^{\epsilon}_{xx},\\
\end{cases}
\end{equation}
with initial  data 
\begin{equation}
u^{\epsilon}(x,0)=u_0(x), \  \ v^{\epsilon}(x,0)=v_0(x),
\end{equation}
exists in $[0,\infty)\times \mathbb{R}$  and $(u^{\epsilon}(x,t),v^{\epsilon}(x,t))\in \overline{\mathcal{O}}$.
\end{prop}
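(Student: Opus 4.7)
The plan is to follow the Chueh--Conley--Smoller framework: establish local-in-time existence of a classical solution to the viscous system, show by a maximum-principle argument on the Riemann invariants that the solution cannot cross any of the boundary arcs $\gamma_j$, and finally extend globally by standard continuation.

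Local existence is routine. Since $K\subset\subset\mathcal{O}$ and the fluxes $f,g$ are smooth on an open neighborhood of $\overline{\mathcal{O}}$ (in particular $\rho$ stays bounded away from zero thanks to the $\epsilon$-shift in the initial density), a fixed-point argument based on the Duhamel formula with the heat semigroup yields a unique classical solution $U^\epsilon=(u^\epsilon,v^\epsilon)$ on a maximal interval $[0,T^\ast)$, after a preliminary mollification of the initial data.

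Writing the system as $U^\epsilon_t+F(U^\epsilon)_x=\epsilon U^\epsilon_{xx}$ with $F=(f,g)$, if $\xi_j$ is a Riemann invariant associated with eigenvalue $\mu_j$ then $\nabla\xi_j\cdot\nabla F=\mu_j\nabla\xi_j$, and applying $\nabla\xi_j$ to the equation yields
\[
(\xi_j)_t+\mu_j\,(\xi_j)_x=\epsilon(\xi_j)_{xx}-\epsilon\,\nabla^{2}\xi_j\bigl(U^\epsilon_x,U^\epsilon_x\bigr).
\]
Suppose for contradiction that the solution first leaves $\overline{\mathcal{O}}$ at some time $t_0<T^\ast$, touching $\gamma_j$ at a point $(x_0,t_0)$. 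By \eqref{geom} the vector $\nabla\xi_j$ is the outer normal to $\gamma_j$ at $U^\epsilon(x_0,t_0)$, so $\xi_j\circ U^\epsilon$ attains a space--time local maximum there. The parabolic maximum-principle inequalities then give $(\xi_j)_t\ge0$, $(\xi_j)_x=0$, $(\xi_j)_{xx}\le0$, and the vanishing first derivative forces $U^\epsilon_x$ to be tangent to $\gamma_j$. Convexity of $\overline{\mathcal{O}}$ together with \eqref{geom} implies $\nabla^{2}\xi_j(v,v)\ge0$ whenever $v$ is tangent to $\gamma_j$, so the right-hand side of the above evolution equation is $\le0$ while its left-hand side is $\ge0$; replacing $\xi_j$ by $\xi_j+\delta t$ and letting $\delta\downarrow0$ turns the weak inequalities into strict ones and produces a contradiction, so $U^\epsilon(\cdot,t)\in\overline{\mathcal{O}}$ throughout $[0,T^\ast)$.

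The resulting uniform $L^\infty$ bound keeps the coefficients of the parabolic system smooth and $\rho$ uniformly away from zero, so the standard continuation criterion forces $T^\ast=\infty$. The main obstacle is the curvature input: one must verify that $\nabla^{2}\xi_j$ is positive semidefinite on directions tangent to every $\gamma_j$. This is the precise geometric content of \eqref{geom} once $\overline{\mathcal{O}}$ is assumed convex, and in the intended application to the Aw--Rascle system it reduces to a sign condition on $(\rho P(\rho))''$, consistent with the remark after \eqref{eigen}.
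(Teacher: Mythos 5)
Your argument is correct in substance and follows the same Chueh--Conley--Smoller framework as the paper: a maximum principle applied to the Riemann invariants at a first exit point, with the left-eigenvector property giving the transport form of the equation for $\xi_j$ and the quasi-convexity $\nabla^2\xi_j(v,v)\ge 0$ on tangent directions coming from convexity of $\overline{\mathcal{O}}$ together with \eqref{geom} (a point you justify more carefully than the paper, whose inequality (\ref{eqRI1}) is only true for tangential $\partial_x U$). Where you genuinely diverge is the mechanism for turning the chain of weak inequalities into a contradiction. The paper perturbs the \emph{equation}, adding a penalty $-\delta\nabla P(U^{\epsilon,\delta})$ with $P(U)=|U-Y|^2$, $Y\in\mathcal{O}$, so that \eqref{geom} yields the strictly negative term $-2\delta(U-Y)\cdot\nabla\xi_j<0$ exactly on $\gamma_j$; it then needs a second stage (energy estimate plus Gronwall) to remove the penalty as $\delta\to 0$. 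You perturb the \emph{invariant} instead, which eliminates that entire second stage --- a real simplification --- but two caveats apply. First, the sign: to show $\xi_j\le c_j$ one should compare with $\xi_j-\delta t$ (proving $\xi_j<c_j+\delta t$ and letting $\delta\downarrow 0$), since with $\xi_j+\delta t$ the first-touching inequality only gives $(\xi_j)_t\ge-\delta$, which is not a contradiction. Second, with the correct sign the touching point lies on the level set $\{\xi_j=c_j+\delta t_0\}$, slightly \emph{outside} $\partial\mathcal{O}$, so you must extend the eigenvector and tangential-convexity properties to a neighborhood of $\gamma_j$ by continuity; the paper's penalty keeps the touching point on $\gamma_j$ itself, where \eqref{geom} is hypothesized. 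Both versions leave unaddressed whether the supremum over the unbounded line is attained; your explicit treatment of local existence and continuation is an improvement over the paper, which omits it.
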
 
\begin{proof}
It is  sufficient to prove the  result  for $u_0(x),v_0(x) \in \mathbf{C}^{\infty}(\mathbb{R})$.  Let 
$U^{\epsilon,\delta}=(u^{\epsilon,\delta},v^{\epsilon,\delta})$ be the unique solution  of the  Cauchy problem
\begin{equation}\label{cauchy}
\begin{cases}
\partial_t U^{\epsilon,\delta}+\partial_x F(U^{\epsilon,\delta})=\epsilon\partial^2_xU^{\epsilon,\delta}-\delta\nabla  P(U^{\epsilon,\delta}),\\
U^{\epsilon,\delta}(x,0)=U_0(x)
\end{cases}
\end{equation}
where $F=(f,g)$ and $P(U)=|U-Y|^2$ for  some  fix $Y\in \mathcal{O}$.  If  we suppose  that $U^{\epsilon,\delta}\notin\mathcal{O}$ for all $(x,t)$, then there  exist some $t_0>0$ and $x_0$  such  as
\[
U^{\epsilon,\delta}(x_0,t_0) \in \partial \mathcal{O}.
\]
Since $\in \partial \mathcal{O}=\cup \gamma_j$,  $U^{\epsilon,\delta}(x_0,t_0)\in \gamma_j$ for  some $j$. Then, multiplying  by $\nabla \xi_j$ in (\ref{cauchy}) we have,
\begin{align}\label{eqRI0}
&\partial_t\xi(U^{\epsilon,\delta})+\lambda_i(U^{\epsilon,\delta})\partial_x\xi(U^{\epsilon,\delta})\\
&=\partial^2_x\xi(U^{\epsilon,\delta})-(\partial_xU^{\epsilon,\delta})^TH\xi(U^{\epsilon,\delta})(\partial_xU^{\epsilon,\delta})-\delta\nabla\xi(U^{\epsilon,\delta})\nabla P(U^{\epsilon,\delta}).\notag
\end{align}
Now by (\ref{geom}) we have that  
\begin{equation}\label{eqRI1}
(\partial_xU^{\epsilon,\delta})^TH\xi(U^{\epsilon,\delta})(\partial_xU^{\epsilon,\delta})\geq 0,
\end{equation}
and 
\begin{equation}\label{eqRI2}\delta\nabla\xi(U^{\epsilon,\delta})\nabla P(U^{\epsilon,\delta})=2(U^{\epsilon,\delta}-Y)\nabla\xi(U^{\epsilon,\delta})>0.
\end{equation}
The  characterization of $(x_0,t_0)$ implies 
\begin{equation}\label{eqRI3}
\partial_x\xi(U^{\epsilon,\delta}(x_0,t_0))=0,\ \  \\ \partial^2_x\xi(U^{\epsilon,\delta}(x_0,t_0)).
\end{equation}
Replacing (\ref{eqRI1}), (\ref{eqRI2}), (\ref{eqRI3})  in (\ref{eqRI0}), we have that
\[
\partial_t\xi(U^{\epsilon,\delta}(x_0,t_0))<0, 
\]  
which is a contradiction. Now  we  show that $U^{\epsilon,\delta}\to U^{\delta}$ as $\delta  \to 0$. For this
let $W^{\epsilon,\delta,\sigma}$ be the solution  of
\begin{equation}\label{eqRI4}
\partial_t W^{\epsilon,\delta,\sigma}+\partial_x(G^{\epsilon,\delta,\sigma}W^{\epsilon,\delta,\sigma})=\epsilon W^{\epsilon,\delta,\sigma}-\delta\nabla P(U^{\epsilon,\delta})+\sigma\nabla P(U^{\epsilon,\sigma}),
\end{equation}
where
\[
G^{\epsilon,\delta,\sigma}=\int_0^1DF(sU^{\epsilon,\delta}+(1-s)U^{\epsilon,\sigma})ds.
\]
Multiplying by $W^{\epsilon,\delta,\sigma}$ in (\ref{eqRI4}), and integrating  over $\mathbb{R}$,  we have
\begin{equation}\label{eqRI5}
\frac{d}{dt}\|W^{\epsilon,\delta,\sigma}(t)\|^2_{\mathbf{L}^2(\mathbb{R}^2)}\leq \frac{K}{\epsilon}\|W^{\epsilon,\delta,\sigma}(t)\|^2_{\mathbf{L}^2(\mathbb{R}^2)} +K(\delta+\sigma).
\end{equation} 
Then, integrating  respect  to  variable $t$  over interval $(0,t)$ we have that  
\begin{equation}
\|W^{\epsilon,\delta,\sigma}(t)\|^2_{\mathbf{L}^2(\mathbb{R}^2)}\leq K(\delta+\sigma)t+\int_0^1\frac{K}{\epsilon}\|W^{\epsilon,\delta,\sigma}(t)\|^2_{\mathbf{L}^2(\mathbb{R}^2)}dt.
\end{equation}
Finally by applying  Gronwall's inequality we obtain
\[
\|W^{\epsilon,\delta,\sigma}(t)\|^2_{\mathbf{L}^2(\mathbb{R}^2)}\leq K(\delta+\sigma)te^{\frac{K}{\epsilon}t}
\]
then, for $\sigma=0$ and $\delta \to 0$ we have  that $U^{\epsilon,\delta}\to U^{\epsilon}$ as $\delta \to 0$.
\end{proof}
\section{Relaxation limit}
Based in the Theory of Invariant  Regions  and  Compensated Compactness Method we can obtain the following  result.
\begin{thm}\label{teorem1}
Let $h(\rho)\in \mathbf{C}({\mathbb{R}})$. Suppose  that there  exists  a region
\[
\Sigma=\left\{(\rho,m):W(\rho,m)\leq C1, Z(\rho,m)\geq C2, \rho\geq 0\right\}
\]
were $C1>0$, $C_2>0$. Assume that $\Sigma$ is  shuch that   the curve $m=h(\rho)$ as $0\leq \rho<\rho_1$ and  the  initial  data (\ref{awrascle})
are inside  $\Sigma$ and $(\rho_1,m_1)$ is the intersection of the curve $W=C_1$  with $Z=C_2$. Then, for any fixed
$\epsilon>0$, $\tau>0$ the solution $(\rho^{\epsilon,\tau}(x,t),m^{\epsilon,\tau}(x,t))$ of the Cauchy problem (\ref{awrascle}), (\ref{awdata}) globally  exists and  satifies
\begin{equation}\label{resaw}
0\leq \rho^{\epsilon,\tau}(x,t)\leq M, \  0\leq m^{\epsilon,\tau}(x,t)\leq M, \ (x,t)\in[0,\infty)\times\mathbb{R}. 
\end{equation}
Moreover, if $\tau=o(\epsilon)$ as $\epsilon\to 0$ then there exists a subsequence $(\rho^{\epsilon,\tau},m^{\epsilon,\tau})$
converging a.e.  to $(\rho,m)$ as $\epsilon\to 0$, where $(\rho,m)$ is  the equilibrium state uniquely determined by
\begin{enumerate}[I.]
\item The  function $m(x,t)$ satisfies $m(x,t)=h(\rho(x,t))$ for  almost all $(x,t)\in[0,\infty)\times\mathbb{R}$.
\item The  function $\rho(x,t)$ is the $\mathbf{L}^{\infty}$ entropy solution  of the Cauchy problem
\[
\rho_t+(\rho h(\rho))_x=0,\ \rho(x,0)=\rho_0(x).
\]
\end{enumerate}
\end{thm}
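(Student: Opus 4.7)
The plan is to proceed in three stages: obtain uniform bounds for the viscous regularization of \eqref{awrascle} via an invariant-region argument that accommodates the relaxation source; extract an a.e.\ convergent subsequence by compensated compactness; and finally identify the limit with the equilibrium state, using the scaling $\tau = o(\epsilon)$.

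First I would study the parabolic approximation
\begin{equation*}
\begin{cases}
\rho_t + (m - \rho P(\rho))_x = \epsilon\,\rho_{xx},\\
m_t + \bigl(\tfrac{m^2}{\rho} + m P(\rho)\bigr)_x = \epsilon\,m_{xx} + \tfrac{1}{\tau}\bigl(h(\rho) - m\bigr),
\end{cases}
\end{equation*}
with initial data $(\rho_0+\epsilon, m_0)$, and invoke Proposition~\ref{invariant regions}. The new ingredient relative to Proposition~\ref{invariant regions} is the source: on the boundary curve $W = C_1$ one checks that $\nabla W\cdot (0,h(\rho)-m) = (h(\rho)-m)/\rho \le 0$, since the equilibrium curve $m = h(\rho)$ lies strictly inside $\Sigma$ and therefore $m\ge h(\rho)$ along $W=C_1$; an analogous calculation on $Z = C_2$ shows the source cannot push solutions out of $\Sigma$. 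Together with the invariance under the flux this yields the uniform $L^{\infty}$ bounds \eqref{resaw}, independent of $\epsilon$ and $\tau$, and global existence then follows by the standard continuation argument.

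Next I would apply the compensated-compactness machinery developed in \cite{Luref1} for the Keyfitz--Kranzer type Aw--Rascle system. For any weak entropy pair $(\eta,q)$ of the homogeneous system, evaluating $\partial_t\eta(U^{\epsilon,\tau})+\partial_x q(U^{\epsilon,\tau})$ produces a viscous correction of order $\epsilon$, a sign-definite entropy-dissipation term (which supplies $H^{-1}_{\mathrm{loc}}$ compactness), and a relaxation term $\tau^{-1}\eta_m(h(\rho)-m)$. The key energy estimate is
\begin{equation*}
\iint_{K}\bigl(m^{\epsilon,\tau}-h(\rho^{\epsilon,\tau})\bigr)^{2}\,dx\,dt \;\le\; C\,\tau ,
\end{equation*}
obtained by multiplying the $m$-equation by $m-h(\rho)$ and exploiting the $L^{\infty}$ bound; under $\tau=o(\epsilon)$ the relaxation contribution to the entropy flux is $o(1)$ in $H^{-1}_{\mathrm{loc}}$. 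Murat's lemma together with the Young-measure reduction (or Tartar's commutation identity specialized to this system) then produces a subsequence converging a.e.\ to some $(\rho,m)$.

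The last and most delicate step is the identification of the limit. The $L^{2}$ bound on $m-h(\rho)$, combined with the a.e.\ convergence of $\rho^{\epsilon,\tau}$ and the continuity of $h$, yields $m(x,t)=h(\rho(x,t))$ a.e., proving (I). Passing to the limit in the first equation of \eqref{awrascle} then gives $\rho_t + (\rho h(\rho))_x = 0$ distributionally. For the Kru\v{z}kov entropy condition in (II) I would lift each convex entropy $\eta_{0}(\rho)$ of the scalar equilibrium equation to a system entropy $\eta(\rho,m)=\eta_{0}(\rho)$ with appropriate flux, write the entropy inequality for the viscous approximation, and pass to the limit, using the dissipation estimate above to kill the cross-terms. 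The main obstacle will be controlling precisely these cross-terms between viscosity and relaxation in the entropy production; this is where the hypothesis $\tau=o(\epsilon)$ becomes essential and sharp, and uniqueness of the Kru\v{z}kov solution will promote the subsequential convergence to convergence of the full family.
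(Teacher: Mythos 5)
Your overall three-stage strategy (invariant region with source, dissipation estimate, compensated compactness plus identification) is the same as the paper's, and your treatment of the source term on the boundary of $\Sigma$ is fine. But there is a concrete gap in the middle step, which is the heart of the argument: the estimate $\iint_K (m-h(\rho))^2\,dx\,dt \le C\tau$ together with the companion bound on $\epsilon(\rho_x^2+m_x^2)$ cannot be obtained ``by multiplying the $m$-equation by $m-h(\rho)$.'' The multiplier $m-h(\rho)$ corresponds to the candidate entropy $E(\rho,m)=\tfrac12(m-h(\rho))^2$, whose Hessian in the conserved variables has determinant $-h''(\rho)(m-h(\rho))$ and is therefore not positive semidefinite in general; consequently the viscous contribution $-\epsilon\,(U_x)^T (\mathrm{Hess}\,E)\,U_x$ has no sign, and the cross-term $\epsilon h'(\rho)\rho_x m_x$ cannot be absorbed because the $m$-equation alone produces no $\epsilon\rho_x^2$ dissipation. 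The paper repairs exactly this by multiplying the \emph{full} system by $\nabla Q$ with $Q(\rho,m)=\tfrac{m^2}{2}-h(\rho)m+\tfrac{C_1\rho^2}{2}$, where $C_1$ is chosen large enough (using the already-established $L^\infty$ bound and $h\in\mathbf{C}^2$) that $\mathrm{Hess}\,Q\ge C_2 I$ on the invariant region; this yields simultaneously $\epsilon C_2(\rho_x^2+m_x^2)$ and $\tau^{-1}(h(\rho)-m)^2$ on the good side of the inequality. Note also that the hypothesis $\tau=o(\epsilon)$ is already needed \emph{here}, to close the Young-inequality absorption $C_4\tau\le (C_2-T)\epsilon$ of the flux cross-terms, not only later when showing the relaxation term is negligible in $H^{-1}_{\mathrm{loc}}$ as you suggest.

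A secondary difference worth noting: you propose to run compensated compactness with entropy pairs of the $2\times 2$ Keyfitz--Kranzer system as in \cite{Luref1}. The paper instead rewrites the first equation as the scalar equilibrium law $\rho_t+\psi(\rho)_x=\epsilon\rho_{xx}+(\rho\phi(\rho,h(\rho))-\rho\phi(\rho,m))$ with $\psi(\rho)=\rho\phi(\rho,h(\rho))$, shows $\eta(\rho)_t+q(\rho)_x$ is $H^{-1}_{\mathrm{loc}}$-compact for \emph{scalar} entropy pairs using the dissipation estimates and Murat's lemma, and then applies the div--curl lemma with Tartar's pairs and a Minty argument to get strong convergence of $\rho^{\epsilon,\tau}$. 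This scalar reduction is simpler and avoids the delicate entropy analysis of the linearly degenerate second field; your plan is workable in principle but would require substantially more machinery. Your identification of the limit and the lifting of scalar Kru\v{z}kov entropies is consistent with what the paper does.
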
\noindent
The proof  of this  theorem is postponed for later, first we   collect  some preliminary estimates  in the  following lemmas. 
\begin{lem}\label{lema0}
Let $(\rho_{\epsilon},m_{\epsilon})$ be solutions of the system (\ref{awrascle}), with bounded measurable initial data (\ref{awdata}), and the condition S given in (ref) holds. Then $(\rho_{\epsilon},m_{\epsilon})$
is uniformly bounded in $\mathbf{L}^{\infty}$ with respect $\epsilon$ and $\tau$.
\end{lem}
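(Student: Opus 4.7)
The plan is to apply the invariant-region machinery of Proposition \ref{invariant regions}, adapted to accommodate the relaxation source term $\frac{1}{\tau}(h(\rho)-m)$. Since $\Sigma$ is bounded by the two Riemann-invariant level curves $W=C_1$ and $Z=C_2$ together with $\rho\geq 0$, it is already an invariant region for the homogeneous Aw--Rascle system under the $\epsilon$-parabolic regularization. The task therefore reduces to checking that at each point of $\partial\Sigma$ the source term either points into $\Sigma$ or is tangent to the boundary; then the same contradiction argument used in Proposition \ref{invariant regions} carries over.

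First I would add the artificial viscosity $\epsilon\partial_x^2$ together with the auxiliary perturbation $\delta\nabla P(U)$ with $P(U)=|U-Y|^2$ for a fixed $Y$ in the interior of $\Sigma$, obtaining a smooth solution $U^{\epsilon,\delta,\tau}$. Assuming a first exit point $(x_0,t_0)\in\partial\Sigma$ and testing the equation against $\nabla W$ and $\nabla Z$, all terms behave exactly as in (\ref{eqRI0})--(\ref{eqRI3}) except the source contribution. Using $\nabla W=(-m/\rho^2,1/\rho)$ and $\nabla Z=(-m/\rho^2-P'(\rho),1/\rho)$, one computes
\[
\nabla W\cdot\bigl(0,\tfrac{1}{\tau}(h(\rho)-m)\bigr)=\tfrac{1}{\tau\rho}(h(\rho)-m),\qquad
\nabla Z\cdot\bigl(0,\tfrac{1}{\tau}(h(\rho)-m)\bigr)=\tfrac{1}{\tau\rho}(h(\rho)-m).
\]
On $\{W=C_1\}$ one has $m=C_1\rho$, so the first inner product is nonpositive iff $h(\rho)/\rho\leq C_1$; on $\{Z=C_2\}$ one has $m=\rho(C_2+P(\rho))$, so the second is nonnegative iff $h(\rho)/\rho-P(\rho)\geq C_2$. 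Both inequalities are precisely the statement that the equilibrium curve $m=h(\rho)$ lies inside $\Sigma$, which is part of the hypotheses of Theorem \ref{teorem1}. Hence $\partial_t\xi(U^{\epsilon,\delta,\tau}(x_0,t_0))<0$ still holds strictly, giving the required contradiction.

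Next I would re-run the passage $U^{\epsilon,\delta,\tau}\to U^{\epsilon,\tau}$ as $\delta\to 0$ along the lines of (\ref{eqRI4})--(\ref{eqRI5}); the new source term is globally Lipschitz on $\Sigma$ (thanks to the continuity of $h$ and the lower bound $\rho\geq\epsilon>0$ coming from the shifted initial datum (\ref{awdata})), so it merely adds a $K/\tau$ contribution to the constant that is absorbed by Gronwall's inequality. The resulting bounds $0\leq\rho^{\epsilon,\tau}$, $W(\rho^{\epsilon,\tau},m^{\epsilon,\tau})\leq C_1$ and $Z(\rho^{\epsilon,\tau},m^{\epsilon,\tau})\geq C_2$ imply $\rho^{\epsilon,\tau}\leq\rho_1$ and then $m^{\epsilon,\tau}\leq C_1\rho_1$, uniformly in both $\epsilon$ and $\tau$. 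With $M:=\max(\rho_1,C_1\rho_1)$ this gives (\ref{resaw}) and establishes the lemma.

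The main obstacle is the compatibility step on $\partial\Sigma$: the usual invariant-region argument fails for generic source terms, and only the geometric assumption that the graph of $h$ lies inside $\Sigma$ makes the relaxation term flow inward. Once this inequality is verified the rest is a routine adaptation of Proposition \ref{invariant regions}, with care taken that no constants blow up as $\tau\to 0$ in the final $L^\infty$ bound (they do not, because the bounds are obtained pointwise from the invariance of $\Sigma$ rather than from the Gronwall estimate).
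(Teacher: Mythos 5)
Your argument is correct and reaches the same conclusion, but it handles the relaxation term differently from the paper. The paper's formal proof of this lemma only establishes invariance of $\Sigma$ for the homogeneous viscous system (\ref{norelaxawrascle}) via Proposition \ref{invariant regions}; the source term is dealt with afterwards by rescaling $R(y,s)=\rho(\tau y,\tau s)$, $M(y,s)=m(\tau y,\tau s)$, which removes $\tau$ from the equations, and then invoking the subcharacteristic stability condition $\lambda_1(\rho,h(\rho))<h'(\rho)<\lambda_2(\rho,h(\rho))$ together with the cited invariant-region result to conclude that $(0,h(R)-M)$ points into the region. You instead keep $\tau$ in place and verify the inward-pointing condition directly: since $\nabla W\cdot(0,s)=\nabla Z\cdot(0,s)=s/\rho$, the sign of the source contribution on $\{W=C_1\}$ and $\{Z=C_2\}$ reduces exactly to $W(\rho,h(\rho))\leq C_1$ and $Z(\rho,h(\rho))\geq C_2$, i.e.\ to the hypothesis of Theorem \ref{teorem1} that the equilibrium curve lies in $\Sigma$, and this sign is manifestly independent of $\tau>0$, which is precisely what makes the $L^\infty$ bound uniform in $\tau$. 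Your route is more self-contained (it does not lean on the external citation or on the stability condition, whose statement in the paper contains a typo) and makes the $\tau$-uniformity transparent; the paper's rescaling buys a $\tau$-free system that is reused later in the relaxation-limit analysis. Two small points you share with the paper and should not oversell: the boundary piece $\rho=0$ is where $\nabla W$ and $\nabla Z$ blow up, so the positivity of $\rho$ (which you attribute to the shift $\rho_0+\epsilon$ in (\ref{awdata})) needs its own maximum-principle argument for the first equation; and in the $\delta\to 0$ Gronwall step the source is only Lipschitz if $h$ is, whereas Theorem \ref{teorem1} assumes merely $h\in\mathbf{C}(\mathbb{R})$ (the $C^2$ hypothesis appears only in Lemma \ref{lema1}).
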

\begin{proof}
First,  we show that the region 
\[
\Sigma=\left\{(\rho,m):W(\rho,m)\leq C1, Z(\rho,m)\geq C2, \rho\geq 0\right\}
\]
is invariant  for the parabolic system (see Figure 1)
\begin{equation}\label{norelaxawrascle}
\begin{cases}
\rho_t+(m -\rho P(\rho)))_x=\epsilon \rho_{xx},\\
m_t+(\frac{m^2}{\rho}+mP(\rho))_x=\epsilon m_{xx}.
\end{cases}
\end{equation}
 If $\gamma_1$ is given for $m(\rho)=C_1\rho$ and $\gamma_2$ is given
for $m(\rho)=\rho C_2+\rho P(\rho)$, it is easy to show that if $u=(\overline{\rho},\overline{m})\in \gamma_1$ and
$y=(\rho,m)\in \Sigma$ it then holds
\[
(u-y)\nabla W(u)>0
\]
and  if $u=(\overline{\rho},\overline{m})\in \gamma_2$ and
$y=(\rho,m)\in \Sigma$ then we have
\[
(u-y)\nabla W(u)>0.
\]
Using Proposition (\ref{invariant regions}), we have that $\Sigma$ is an invariant  region for (\ref{norelaxawrascle}).
\end{proof}
\begin{figure}[h!]\label{fig1}
\begin{center}
\begin{tikzpicture}
\draw (0,0)  (3,3);
\draw[very thick,<->] (3,0) node[below]{$\rho$} -- (0,0) --
(0,3) node[left]{$m$};
\tikzset{func/.style={thick,color=black!90}}
\draw[func,domain=0:2.5] plot [samples=200] (\x,{\x})node[right] {\small $W=C_1$};
\draw[func,domain=0:1.7] plot [samples=200] (\x,{1/5*\x^5})node[left] {\small $Z=C_2$};
\draw[thick] plot [smooth,tension=1.5] coordinates{(0,0)(0.75,0.5)(1.5,1.5)};
\draw (1.75,1)node[below] {\small $h(\rho)=m$};
\draw (0.5,1.2)node[below] {\small $\Sigma_1$};
\end{tikzpicture}
\end{center}
\caption{Riemann Invarian Region I}
\end{figure}
For the case in which the system \ref{awrascle} contains relaxation  term we  use the ideas of the  authors in \cite{relaxforpreassure} 
\begin{eqnarray}
R(y,s)&=\rho(\tau y, \tau s)\\
M(y,s)&=m(\tau y, \tau s),
\end{eqnarray}
the  system (\ref{awrascle}) is transformed into the system
\begin{equation}
\begin{cases}
R_s+\left(M+RP(R)\right)_y=\epsilon R_{yy},\\
M_s+\left(\frac{M^2}{R}-RP(R)\right)_{y}=h(R)-M+\epsilon M_{yy}
\end{cases}
\end{equation}
which does not depend on $\tau$, and taken $C_1=W(1,0)$, $C_2=Z(1,0)$ the  curves $M=Q(R)$, $W=C1$, $Z=C_2$ intersect
with  the $R$ axis at the same point in $R=0$, $R=1$ (see Figure 2). Using the stability conditions
\begin{equation}
\lambda_1(\rho,h(\rho))<h^{'}(\rho)<\lambda_1(\rho,h(\rho)),
\end{equation} it is  easy to show  that  the  vector $(0,h(R)-M)$ points inwards the  region $\Sigma_2$ and from \cite{positive} it follows that $\Sigma_2$ is an invariant  region.
\begin{figure}[h!]\label{fig1}
\begin{center}
\begin{tikzpicture}
\draw (0,0)  (4,4);
\draw[very thick,<->] (4,0) node[below]{$R$} -- (0,0) --
(0,3) node[left]{$M$};
\tikzset{func/.style={thick,color=black!90}}
\draw[func,domain=0:2.5] plot [samples=200] (\x,{0*\x})node[below] {\small $W=C_1^0$};
\draw[func,domain=0:1.7,rotate=-45] plot [samples=200] (\x,{1/5*(\x)^5})node[left] {\small $Z=C_2$};
\draw[thick,rotate=-45] plot [smooth,tension=1.5] coordinates{(0,0)(0.75,0.5)(1.5,1.5)};
\draw (0.5,0.5)node[below] {\small $\Sigma_2$};
\end{tikzpicture}
\end{center}
\caption{Riemann Invarian Regions II}
\end{figure}
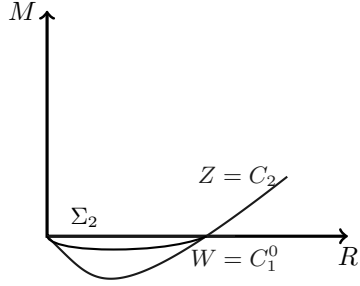

\begin{lem}\label{lema1}
If  the solutions  of (\ref{awrascle}), (\ref{awdata}) have an  a priori $\mathbf{L}^{\infty}$ bounds, and $h\in \mathbf{C}^2$, then $\epsilon (p_x)2$, $\epsilon(m_x)2$, $\frac{(h(\rho)-m)}{\tau}$ are  bounded in $\mathbf{L}^1_{loc}$ on the case
that $\tau=o(\epsilon)(\epsilon \to 0)$, when $\epsilon\to 0$.
\end{lem}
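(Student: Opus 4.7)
The plan is to apply the entropy-dissipation method to the $\epsilon$-viscous version of (\ref{awrascle}). The essential ingredient is a smooth function $\eta(\rho,m)$ that is an entropy of the homogeneous Aw-Rascle system, is strictly convex on $\Sigma$ with a uniform lower bound $\nabla^2\eta\ge c_1 I$, and is dissipative for the relaxation source in the sense that $\eta_m(\rho,m)(h(\rho)-m)\le -c_0(m-h(\rho))^2$ for some $c_0>0$. A natural building block is the quadratic deviation from equilibrium, $\eta_0(\rho,m)=(m-h(\rho))^2/(2\rho)$, which vanishes on the equilibrium curve $\{m=h(\rho)\}$ and manifestly satisfies the dissipation inequality; its Hessian, however, is rank-one degenerate in the direction tangent to that curve, so I would add a small positive multiple of a strictly convex entropy of the homogeneous Aw-Rascle system, available in abundance for this Keyfitz-Kranzer type system from \cite{Luref1}, in order to recover uniform strict convexity while preserving the dissipation sign.

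Once $(\eta,q)$ is in hand, multiplying the parabolic perturbation of (\ref{awrascle}) by $\nabla\eta(U^{\epsilon,\tau})$, with $U=(\rho,m)$, yields the local entropy balance
\begin{equation*}
\partial_t\eta(U)+\partial_x q(U)=\epsilon\,\partial^2_x\eta(U)-\epsilon\,U_x^{\top}\nabla^2\eta(U)\,U_x+\frac{1}{\tau}\,\eta_m(U)\bigl(h(\rho)-m\bigr).
\end{equation*}
Integrating over $[-R,R]\times[0,T]$ and absorbing the boundary and initial contributions with the $L^\infty$ bound from Lemma \ref{lema0}, the strict convexity gives
\begin{equation*}
\epsilon\int_0^T\int_{-R}^{R}\bigl(\rho_x^2+m_x^2\bigr)\,dx\,dt\le C(R,T),
\end{equation*}
while the dissipation inequality gives
\begin{equation*}
\frac{1}{\tau}\int_0^T\int_{-R}^{R}\bigl(h(\rho)-m\bigr)^2\,dx\,dt\le C(R,T).
\end{equation*}
The first inequality delivers the stated $L^1_{\mathrm{loc}}$ bounds on $\epsilon\rho_x^2$ and $\epsilon m_x^2$, and the second yields the corresponding bound on $(h(\rho)-m)^2/\tau$. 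To extract the control of $(h(\rho)-m)/\tau$ itself, I would rewrite this quantity from the momentum equation as $m_t+(m^2/\rho+mP(\rho))_x-\epsilon m_{xx}$ and test against $C^\infty_c$ cutoffs; the viscous term contributes $\|\epsilon m_x\|_{L^2}=\sqrt{\epsilon}\cdot\|\sqrt{\epsilon}\,m_x\|_{L^2}=O(\sqrt{\epsilon})$, and the scaling $\tau=o(\epsilon)$ ensures the balance of the remaining terms.

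The main obstacle is the construction of the entropy $\eta$: the constants $c_0$ and $c_1$ must be positive and uniform on $\Sigma$, independent of $\epsilon$ and $\tau$. The linear degeneracy of the second characteristic family of (\ref{awrascle}) and the possibility that $\Sigma$ approaches the vacuum $\rho=0$ make a direct verification nontrivial; the cleanest route is to work in Riemann-invariant coordinates $(W,Z)$ on the interior of $\Sigma$, verify strict convexity and dissipation there, and transfer back to $(\rho,m)$, relying on the stability hypothesis of Theorem \ref{teorem1} to ensure that the equilibrium curve $m=h(\rho)$ lies strictly in the interior of $\Sigma$.
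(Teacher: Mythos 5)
Your overall strategy (multiply the viscous system by the gradient of a convex, dissipative function and integrate against a cutoff) is the same as the paper's, but there is a genuine gap at exactly the point you defer to the end: the existence of a function $\eta$ that is simultaneously (a) an \emph{exact} entropy of the homogeneous system, (b) uniformly convex on $\Sigma$, and (c) quadratically dissipative for the source. Your building block $\eta_0(\rho,m)=(m-h(\rho))^2/(2\rho)$ satisfies (c) but is not, for general $h$, an entropy of the homogeneous Aw--Rascle system (it does not solve the compatibility equation $\nabla q=\nabla\eta\,DF$), and adding a convex entropy of the homogeneous system to it does not repair that. Consequently the balance law $\partial_t\eta+\partial_x q=\epsilon\partial_x^2\eta(U)-\epsilon U_x^{\top}\nabla^2\eta(U)U_x+\tau^{-1}\eta_m(h(\rho)-m)$, on which your whole estimate rests, is not available; requiring (a) and (c) together is overdetermined and no construction is given.

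The paper's proof works precisely because it drops requirement (a). It takes $Q(\rho,m)=\tfrac{m^2}{2}-h(\rho)m+\tfrac{C_1\rho^2}{2}$, which is uniformly convex for $C_1$ large (using the $\mathbf{L}^{\infty}$ bound) and has $Q_m=m-h(\rho)$, so the source contributes $-(h(\rho)-m)^2/\tau$ exactly; the price is that $Q_{\rho}(\rho\phi)_x+Q_m(m\phi)_x$ is not an exact $x$-derivative. The paper expands it by the mean value theorem about the equilibrium $m=h(\rho)$ into an exact derivative ($T_1$, $\widehat{T_1}$) plus remainders ($T_2$, $T_3$, $\widehat{T_2}$, $\widehat{T_3}$) proportional to $m-h(\rho)$ times gradient terms, and absorbs these by Young's $\delta$-inequality: a fraction $\delta$ is swallowed by the dissipation $(h(\rho)-m)^2/\tau$ and a term $C(\delta)\tau(\rho_x^2+m_x^2)$ is swallowed by the viscosity, which is exactly where the hypothesis $\tau=o(\epsilon)$ (in the form $C_4\tau\le(C_2-T)\epsilon$) is used. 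In your argument $\tau=o(\epsilon)$ plays no role in deriving the two main bounds, which is a symptom of the missing step. One point in your favour: you correctly notice that the quadratic dissipation controls $(h(\rho)-m)^2/\tau$ rather than $(h(\rho)-m)/\tau$, and your proposal to recover the latter from the momentum equation tested against cutoffs is a sensible supplement that the paper's own proof omits.
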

\begin{proof}
Let $Q(\rho,m)=\frac{m^2}{2}-h(\rho)m+\frac{C_1 \rho^2}{2}$, since $(\rho,m)$ is bounded we can  choose $C_1$ such that
\[
Q_{\rho \rho}(\rho_x)^2+2Q_{\rho m}\rho_x m_x +Q_{m m}(m_x)^2\geq C_2(\rho^2_x+m^2_x).
\]
Multiplying the system (\ref{awrascle}) by $(Q_{\rho},Q_m)$  we have
\begin{align}\label{eqor1}
&Q(\rho,m)_t+Q_{\rho}(\rho,m)(\rho\phi(\rho,m))_x+Q_{m}(\rho,m)(m\phi(\rho,m))_x\\
&\leq \epsilon\left(Q_{xx}-C_2(\rho^2_x+m^2_x)\right).\notag
\end{align}
Adding terms and applying  the mean value  theorem in the  $m$ variable  to the functions
$\phi_1(\rho,m)=\rho\phi(\rho,m)$ and $Q(\rho,m)$ we have that 
\begin{equation}\label{eq1}
Q_{\rho}(\rho,m)(\rho\phi(\rho,m))_x=T_1+T_2+T_3,
\end{equation}
where
\begin{align*}
T_1&=\left(Q_{\rho}(\rho,m)\left(\phi_1(\rho,m)-\phi_1(\rho,h\rho)\right)+\int^{\rho}\phi_1(s,h(s)\frac{d}{ds}\phi_1(s,h(s))ds\right)_x,\\
T_2&=-\left(Q_{\rho \rho}(\rho,m)\rho^2_x+Q_{\rho m}(\rho,m)m^2_x\right)\phi_{1m}(\rho,\alpha_1)(m-h(\rho)) \ \text{and}\\
T_3&=Q_{\rho m}(\rho,\beta_1)(m-h(\rho))\left(\phi_{1 \rho}(\rho,h(\rho))+\phi_{1m}(\rho,h(\rho))h^{'}(\rho)\right)\rho_x.
\end{align*}
Putting $\phi_2(\rho,m)=m\phi(\rho,m)$ and  proceeding  as  above we have  that
\begin{equation}\label{eq2}
Q_{m}(\rho,m)(m\phi(\rho,m))_x=\overline{T_1}+\overline{T_2}+\overline{T_3},
\end{equation}
where
\begin{align*}
\widehat{T_1}&=\left(Q_{m}(\rho,m)\left(\phi_2(\rho,m)-\phi_2(\rho,h\rho)\right)+\int^{\rho}\phi_2(s,h(s)\frac{d}{ds}\phi_2(s,h(s))ds\right)_x,\\
\widehat{T_2}&=-\left(Q_{m \rho}(\rho,m)\rho^2_x+Q_{mm}(\rho,m)m^2_x\right)\phi_{2m}(\rho,\alpha_1)(m-h(\rho))\ \text{and}\\
\widehat{T_3}&=Q_{m m}(\rho,\beta_1)(m-h(\rho))\left(\phi_{2 \rho}(\rho,h(\rho))+\phi_{2m}(\rho,h(\rho))h^{'}(\rho)\right)\rho_x.
\end{align*}
Now replacing the values of $Q$ and  $\phi_1$ in $T_2$ we have that
\[T_2=-\left((-h^{''}(\rho)m+C_1)\rho^2_x-h^{'}(\rho)m^2_x\right)(m-h\rho),\] then
\[
|T_2|=C|\rho^2_x+m^2_x||h(\rho)-m|,
\]
where $C=max(|h^{''}(\rho)m-C_1|,|h^{'}( \rho)|)$. Using  the Young's  $\delta$-inequality 
\begin{equation}\label{ineLu}
|ab|\leq\delta a^2+\frac{b^2}{4\delta},
\end{equation}
we have  that 
\begin{equation}\label{ineqT1}
|T_2|\leq C_2(\delta)\tau(\rho^2_x+m^2_x)+\delta\frac{(m-h(\rho))^2}{\tau}.
\end{equation}
For $\widehat{T_2}=-\left(-h^{'}(\rho)\rho^2_x+m^2_x\right)(2\frac{\alpha_2}{\rho}-P(\rho))(m-h(\rho))$, using (\ref{ineLu}) we have 
\begin{equation}\label{ineTb1}
|\widehat{T_2}|\leq \widehat{C_2}(\delta)\tau(\rho^2_x+m^2_x)+\delta\frac{(h(\rho)-m)^2}{4\tau}.
\end{equation}
For $T_3$ and $\widehat{T_3}$ we have
\begin{align}
|T_3|&\leq \delta\frac{(h(\rho)-m)^2}{\tau}+C_3(\delta)\rho^2_x\ \text{and}\\
|\widehat{T_3}|&\leq \delta\frac{(h(\rho)-m)^2}{\tau}+\widehat{C_3}(\delta)\rho^2_x. 
\end{align}
Let us introducing the  following
\begin{align}\label{eq1aux}
A&= C_2(\delta)\tau(\rho^2_x+m^2_x)+\delta\frac{(m-h(\rho))^2}{\tau},\\
\widehat{A}&= \widehat{C_2}(\delta)\tau(\rho^2_x+m^2_x)+\delta\frac{(m-h(\rho))^2}{\tau},\\
B&=\delta\frac{(h(\rho)-m)^2}{\tau}+C_3(\delta)\rho^2_x,\\
\widehat{B}&=\delta\frac{(h(\rho)-m)^2}{\tau}+\widehat{C_3}(\delta)\rho^2_x,
\end{align}
and $R(\rho,m)=T_1+\widehat{T_2}$. Then  substituting  (\ref{eq1}), (\ref{eq2}) in (\ref{eqor1}) and  using
(\ref{ineTb1}-\ref{eq1aux}), and $\delta=\frac{1}{8}$  we have
\begin{equation}\label{ineqdef}
Q(\rho,m)_t+R(\rho,m)_t +(\epsilon C_2-\tau C_4)(\rho^2_x+m^2_x)+\frac{(h(\rho)-m)^2}{2\tau}\leq\epsilon Q(\rho,m)_{xx}.
\end{equation}
For  $\epsilon $ sufficiently  small we can choose $C_2$, $C_4$ such  that $C_4\tau\leq (C_2-T)\epsilon$ for $T>0$. Let  $K$ be a compact  subset of $\mathbb{R}\times \mathbb{R^{+}}$  and  $\Phi(x,t)\in \mathbf{D}(\mathbb{R}\times \mathbb{R^{+}})$, sucha  that $\Phi=1$ in $K$, $0\leq \Phi\leq 1$. Then, multiplying  (\ref{ineqdef}) by $\Phi(x,t)$ and integrating by parts  we have
\begin{equation}
\int_{\mathbb{R}\times \mathbb{R^{+}}}\left(2T\epsilon(\rho^2_x+m^2_x)\Phi+\frac{(h(\rho)-m)^2}{\tau}\Phi\right) dxdt\leq M(\Phi).
\end{equation}
\end{proof}
\begin{lem}\label{lema2}
If $(\eta(\rho),q(\rho))$ is  any  entropy-entropy flux pair for  the scalar equation
\begin{equation}
\rho_+(\rho \phi(\rho,h(\rho)))_x=0,
\end{equation}
then 
\[
\eta(\rho)_t+q(\rho)_x 
\]
is compact in $H^{-1}(\mathbb{R}\times \mathbb{R^{+}})$.
\end{lem}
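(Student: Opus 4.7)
The plan is to express $\eta(\rho^{\epsilon,\tau})_t+q(\rho^{\epsilon,\tau})_x$ in terms of the viscous system and the relaxation defect $m-h(\rho)$, then invoke the Murat--Tartar interpolation lemma. Setting $\psi(\rho):=\rho\phi(\rho,h(\rho))=h(\rho)-\rho P(\rho)$, the entropy relation for the scalar equation reads $q'(\rho)=\eta'(\rho)\psi'(\rho)$. Using the parabolic first equation $\rho_t+(m-\rho P(\rho))_x=\epsilon\rho_{xx}$, a direct calculation shows that the $P(\rho)+\rho P'(\rho)$ contributions cancel between $\eta'(\rho)(m-\rho P(\rho))_x$ and $q'(\rho)\rho_x$, leaving
\begin{align*}
\eta(\rho)_t+q(\rho)_x=-\eta'(\rho)\bigl(m-h(\rho)\bigr)_x+\epsilon\,\eta'(\rho)\rho_{xx}.
\end{align*}

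Next I would split each of the two summands into a conservation-form piece and a lower-order remainder:
\begin{align*}
\epsilon\,\eta'(\rho)\rho_{xx}&=\bigl(\epsilon\,\eta'(\rho)\rho_x\bigr)_x-\epsilon\,\eta''(\rho)\rho_x^2,\\
-\eta'(\rho)\bigl(m-h(\rho)\bigr)_x&=-\bigl(\eta'(\rho)(m-h(\rho))\bigr)_x+\eta''(\rho)\rho_x\bigl(m-h(\rho)\bigr).
\end{align*}
The two conservation-form pieces are compact in $H^{-1}_{loc}$: by Lemma~\ref{lema1}, $\sqrt{\epsilon}\,\rho_x$ is bounded in $\mathbf{L}^2_{loc}$, hence $\epsilon\,\eta'(\rho)\rho_x\to 0$ in $\mathbf{L}^2_{loc}$; and $(m-h(\rho))/\sqrt{\tau}$ is bounded in $\mathbf{L}^2_{loc}$ while $\tau\to 0$, so $\eta'(\rho)(m-h(\rho))\to 0$ in $\mathbf{L}^2_{loc}$. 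The remainder $\epsilon\,\eta''(\rho)\rho_x^2$ lies in $\mathbf{L}^1_{loc}$ directly, and for the cross term Young's inequality combined with $\tau=o(\epsilon)$ gives
\[
\bigl|\eta''(\rho)\rho_x(m-h(\rho))\bigr|\leq C\!\left(\epsilon\rho_x^2+\frac{\tau}{4\epsilon}\cdot\frac{(m-h(\rho))^2}{\tau}\right)\in\mathbf{L}^1_{loc}.
\]

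Finally, since $\eta(\rho)$ and $q(\rho)$ are bounded in $\mathbf{L}^\infty$ by Lemma~\ref{lema0}, the full expression $\eta(\rho)_t+q(\rho)_x$ sits in a bounded set of $W^{-1,\infty}_{loc}$; the decomposition above writes it as a sum of a piece compact in $H^{-1}_{loc}$ and a piece bounded in $\mathbf{L}^1_{loc}\hookrightarrow\mathcal{M}_{loc}$. Murat's interpolation lemma then yields compactness in $H^{-1}_{loc}$, as desired. The delicate point is precisely the cross term $\eta''(\rho)\rho_x(m-h(\rho))$: absorbing it into $\mathbf{L}^1_{loc}$ forces the scaling $\tau=o(\epsilon)$ assumed in Theorem~\ref{teorem1}, and a weaker coupling between the relaxation time and the viscosity would prevent the estimate from closing.
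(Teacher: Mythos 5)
Your proposal is correct and follows essentially the same route as the paper: both rewrite the flux as the equilibrium flux $\psi(\rho)$ plus the relaxation defect $m-h(\rho)$, multiply by $\eta'(\rho)$, split the result into divergence-form terms and lower-order remainders controlled by the energy estimates of Lemma~\ref{lema1} together with $\tau=o(\epsilon)$, and conclude with Murat's interpolation lemma. The only cosmetic differences are that you place the term $\bigl(\eta'(\rho)(m-h(\rho))\bigr)_x$ in the $H^{-1}_{loc}$-compact part (via $\mathbf{L}^2_{loc}$ convergence to zero) rather than in the measure-bounded part, and you use Young's inequality where the paper uses Cauchy--Schwarz; both choices close the argument in the same way.
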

\begin{proof}
Adding $\psi(\rho)=\rho\phi(\rho,h(\rho))$ in the first  equation of (\ref{awrascle}) we have
\begin{equation}\label{flux1}
\rho_t+\psi(\rho)_x=\epsilon \rho_{xx}+(\rho\phi(\rho,h(\rho))-\rho\phi(\rho,m)),
\end{equation}
and multiplying  by $\eta^{'}8q)$ in (\ref{flux1}) we have that
\begin{align*}
\eta(\rho)_t+q(\rho)_x&=\epsilon \eta(\rho)_{xx}-\epsilon \eta^{2}(\rho)\rho_{xx}\\
&+\left(\eta^{'}(\rho)(\psi(\rho)-\rho\phi(\rho,m))\right)-\eta^{2}(\rho)(\psi(\rho)-\rho\phi(\rho,m))\rho_x.
\end{align*}
Let $A^{\epsilon}=\epsilon \eta(\rho)_{xx}-\epsilon \eta^{2}(\rho)\rho_{xx}$,  and 
$B^{\epsilon}=\left(\eta^{'}(\rho)(\psi(\rho)-\rho\phi(\rho,m))\right)-\eta^{2}(\rho)(\psi(\rho)-\rho\phi(\rho,m))\rho_x$,
we  state  that $A^{\epsilon}\in H^{-1}_{Loc}(\mathbb{R}\times \mathbb{R^{+}})$ and $B^{\epsilon}$ is bounded in $M(\mathbb{R}\times \mathbb{R^{+}})$, then  for Murat's lemma  we have that 
\[
\eta(\rho)_t+q(\rho)_x 
\]
is compact in $\mathbf{H}^{-1}_{Loc}(\mathbb{R}\times \mathbb{R^{+}})$.
For the first  affirmation see \cite{serre},  we  show that $B^{\epsilon}$ is  bounded in $L^{1}_{Loc}$. Applying
the  mean value   theorem in the second variable  to the function $\phi(\rho,m)$ in $[h(\rho),m]$ and using Lemma \ref{lema1} we have
\begin{align*}
&\int_{\Omega}\eta^{''}(\rho)(\psi(\rho)-\rho\phi(\rho,m))\rho_xdxdt\\
&\leq M\int_{\Omega}(h(\rho)-m)\rho_xdxdt
\leq M\left(\int_{\Omega}\frac{(h(\rho)-m)^2}{\tau}dxdt\right)^{\frac{1}{2}}\left(\int_{\Omega}\tau \rho^2_xdxdt\right)^{\frac{1}{2}},
\end{align*}
and 
\begin{align*}
&\left| \int_{\Omega}(\eta^{'}(\rho)(\psi(\rho)-\rho\phi(\rho,m)))_x\Phi(x,t)dxdt|\right| =\left| \int_{\Omega}(\eta^{'}(\rho)(\psi(\rho)-\rho\phi(\rho,m)))\Phi(x,t)_xdxdt|\right|\\
&\leq  M\left(\int_{\Omega}\frac{(h(\rho)-m)^2}{\tau}dxdt\right)^{\frac{1}{2}}\left(\int_{\Omega}\tau \Phi^2_xdxdt\right)^{\frac{1}{2}}.
\end{align*}
\end{proof}
Now we   prove   Theorem \ref{teorem1}. By  the  Lemma \ref{lema0} we have the a priori bounds (\ref{resaw}), and we also have that  there is  a subsequence  of $(\rho^{\epsilon},m^{\epsilon})$ such  as
\begin{equation}\label{weaklim}
\rho(x,t)=w^{*}-\lim \rho^{\epsilon}(x,t), \ \ m(x,t)=w^{*}-\lim m^{\epsilon}(x,t) 
\end{equation}
Let us introduce the following
\begin{align}\label{entropi}
\eta_1(\theta)&=\theta-k,\\ 
q_1(\theta)&=\psi(\theta)-\psi(k),\\ 
\eta_2(\theta)&=\psi(\theta)-\psi(k) \ \text{and}\\ 
q_2(\theta)&=\int_{\rho}^{\rho^{\epsilon}}(\psi^{'}(s))^2ds. \label{e100}
\end{align}
Then  by the  weak  convergence of determinant \cite{BookLu} page 15, we have that
\begin{equation}\label{weakdet}
\overline{\eta_1(\rho^{\epsilon})q_2(\rho^{\epsilon})}-\overline{\eta_2(\rho^{\epsilon})q_1(\rho^{\epsilon})}=
\overline{\eta_1(\rho^{\epsilon})}\overline{q_1(\rho^{\epsilon})}-\overline{\eta_2(\rho^{\epsilon})}\overline{q_1(\rho^{\epsilon})},
\end{equation}
by  direct  calculations, replacing  $\rho^{\epsilon}$ in \ref{entropi}-\ref{e100}   we have that
\begin{align*}
&\overline{(\rho^{\epsilon}-\rho)\int_{\rho}^{\rho^{\epsilon}}(\psi(\rho))^2-(\psi(\rho^{\epsilon})-\psi(\rho))^2}+\left(\overline{\psi(\rho^{\epsilon})-\psi(\rho)}\right)^2=\\
&\overline{(\rho^{\epsilon}-\rho)}\overline{\int_{\rho}^{\rho^{\epsilon}}(\psi^{'}(s))^2ds},
\end{align*}
an  since by (\ref{weaklim})
\[
\overline{(\rho^{\epsilon}-\rho)}\overline{\int_{\rho}^{\rho^{\epsilon}}(\psi^{'}(s))^2ds}=0.
\]
we have  that
\begin{align}
\overline{\psi(\rho^{\epsilon})}&=\psi(\rho),\\
\overline{(\rho^{\epsilon}-\rho)\int_{\rho}^{\rho^{\epsilon}}(\psi^{'}(s))^2ds-(\psi(\rho^{\epsilon})-\psi(\rho))^2}&=0
\end{align}
Now, using Minty's argument \cite{lions} or  arguments of author in \cite{zhin} it's finished the proof of the Theorem 3.1  
\section{Acknowledgments}
We  would like to thanks Professor Juan Carlos  Galvis   by his observation, and  many valuable suggestions  and to the
professor Yun-guang Lu  by his suggestion this problem.
\bibliographystyle{amsplain}
\bibliography{biblio}
\end{document}